\newtheorem{theorem}{Theorem}[section]
\theoremstyle{definition}
\newtheorem{remark}[theorem]{Remark}
\def\Er{{\mathbb E}}
\def\Pr{{\mathbb P}}
\def\Rr{{\mathbb R}}
\def\Ac{{\mathcal{A}}}
\def\Fc{{\mathcal{F}}}
\def\Rc{{\mathcal{R}}}
\def\({\left(}     
\def\){\right)}    
\def\[{\left[}     
\def\]{\right]}
\def\as{{\frenchspacing a.s.}~}
\keywords{Independence, Approximation, Transport Theory, Optimal Transport  Plan, Monge-Kantorovitch}
\begin{document}
\title[convergence Results]{Convergence Results for Approximation with independent Variables}
\subjclass{Primary: 60G35; Secondary: 68T99, 93E11, 94A99.}
\author{Freddy Delbaen and Chitro Majumdar}
\address{Departement f\"ur Mathematik, ETH Z\"urich, R\"{a}mistrasse
101, 8092 Z\"{u}rich, Switzerland}
\address{Institut f\"ur Mathematik,
Universit\"at Z\"urich, Winterthurerstrasse 190,
8057 Z\"urich, Switzerland}
 \address{RSRL; Jumeirah Beach Residence (JBR), Bahar Tower, P.O.Box 29215, Dubai, United Arab Emirates}
  \email{delbaen@math.ethz.ch, chitro.majumdar@rsquarerisklab.com}
\begin{abstract}
For a square integrable m-dimensional random variable $X$ on a probability space $(\Omega,\Fc,\Pr)$ and a sub sigma algebra $\Ac$, we show that there is a constructive way to represent $X-\Er[X\mid\Ac]$ as the sum of a series of variables that are independent of $\Ac$.
\end{abstract}

\maketitle

\section{Notation and Preliminaries}
We use standard probabilistic notation, $(\Omega,\Fc,\Pr)$ is a probability space  and $\Ac \subset\Fc$ is a sub $\sigma-$algebra. All random variables will be square integrable and all norms will be the $L^2$ norm.  For random variables $X\colon \Omega \rightarrow \Rr^m$, the norm is then defined as $\Vert X\Vert^2=\int |X|^2\Pr[d\omega]$, where for $x\in \Rr^m$, $|x|$ denotes the Euclidean norm of $x$.   In \cite{D4} we showed that $X$ can be approximated by a random variable $Y$ that is independent of $\Ac$.  To be more precise and to avoid trivial terms we suppose that $\Er[X \mid \Ac]=0$. We also suppose that the sigma-algebra $\Fc$ is conditionally atomless with respect to $\Ac$.  This property is equivalent to the existence of a $[0,1]$ uniformly distributed random variable, say $U$,  that is independent of $\Ac$, \cite{D3}.  Under these assumptions there is a random variable $Y$ such that 
$$
\Vert X-Y\Vert=\min\left\{  \Vert X-Z\Vert\mid Z \text{ is independent of }\Ac   \right\}.
$$
We also have $\Er[X]=\Er[Y]=0$, where we use integration in $\Rr^m$. We recall that $\Vert X\Vert^2=\Vert X-Y\Vert^2 +\Vert Y\Vert^2$ and that $\Er[X\mid Y]=Y$. These results were stated in different presentations but are not explicitly mentioned in \cite{D4}. We therefore give a sketch on how to prove them.  Let $f:\Rr^m\rightarrow \Rr^m$ be a Borel measurable function such that $f(Y)$ is in $L^2$. Then the optimality of $Y$ implies that for all $\epsilon\in\Rr$:
$$\Vert X - Y\Vert^2 \le\Vert X-Y-\epsilon f(Y)\Vert^2.$$
Expanding this inequality gives
$$
-2\epsilon \Er\[\(X-Y\)\cdot f(Y)\] +\epsilon^2 \Vert f(Y)\Vert^2 \ge 0.
$$
Since this is true for all $\epsilon$ we must have $\Er\[\(X-Y\)\cdot f(Y)\]=0$. This is the same as $\Er[X-Y\mid Y]=0$. But it also implies that $X-Y$ is orthogonal to $Y$ and hence $\Vert X\Vert^2=\Vert X-Y\Vert^2+\Vert Y\Vert^2$.

If $m=1$, it was shown in \cite{D2} that $\Vert Y\Vert \ge \frac{1}{2}\Vert X\Vert_1$ (we needed the $1-$norm there). Applying this result to the coordinate that has the greatest $L^1-$norm we get that for $m\ge 1$: $\Vert Y\Vert \ge \frac{1}{2{m}}\Vert X\Vert_1$. To see this, let us denote the coordinate functions by $\pi_1,\ldots,\pi_m$. Without loss of generality we may suppose that for all $k$ we have $\Er[|\pi_1(X)|] \ge \Er[|\pi_k(X)|]$. It follows that $\Er[|X|]\le \Er[\sum_k |\pi_k(X)|]\le m \Er[|\pi_1(X)|]$. Let now $Z$ be the best approximation of $\pi_1(X)$. The inequality above gives us that $\Vert Z\Vert \ge \frac{1}{2}\Er[|\pi_1(X)|]\ge \frac{1}{2m}\Er[|X|]$.  Since $\Vert Y\Vert \ge \Vert Z\Vert$ the proof is complete.

This bound was the basis for a convergence result. We inductively  define for a given $X$ with $\Er[X\mid\Ac]=0$, the sequence $X_1=X$, and then we define $Y_n$ to be the best approximation of $X_n$ that is independent of $\Ac$.  $X_{n+1}=X_n-Y_n$.  Clearly we have
$$
X_1=Y_1+Y_2+\ldots+Y_n+X_{n+1}\quad\text{and}\quad \Vert X_1\Vert^2=\sum_{k=1}^{k=n}\Vert Y_k\Vert^2+\Vert X_{n+1}\Vert^2.
$$
This shows that $\sum_k\Vert Y_k\Vert^2<\infty$ and hence $\sum_k\Vert X_k\Vert_1^2<\infty$. This implies $X_k\rightarrow 0$ in $L^1-$norm and  $X_k\rightarrow 0$ almost surely.   Since obviously $\sup_n\Vert X_n\Vert \le \Vert X_1\Vert$, this in turn implies that $\sum_{k\ge 1} Y_k$ converges to $X_1$ for every $L^p$ norm with $p<2$. The aim of this paper is to show that the result holds for $p=2$.  

The next section gives a summary of the construction of $Y$.  We do not repeat the measurability problems since the  details can be found in \cite{D4}.  Section 3 then discusses an inequality for the resulting transport problem. This is then used to prove the main result of the paper. For details on transport problems we refer to \cite{D8}.

\section{The Construction of the Solution}
 The idea is to ``decompose" $X$ along the atoms of $\Ac$.  This is done by constructing a factorisation of $X$ through a product space where the first factor  is $\Omega$ equipped with the sigma-algebra $\Ac$.  Then on each atom of $\Ac$ we will find $Y$ as the solution of a Monge-Kantorovitch problem. 
 
  \begin{enumerate}
 \item Let $\Phi\colon \Omega \rightarrow\Omega\times\Rr^m
 \times [0,1]$ be defined as $\Phi(\omega)=(\omega, X(\omega),U(\omega))$ It is measurable for $\Fc$ and $\Ac \otimes \Rc^m \otimes \Rc_{[0,1]}$ where $\Rc^m$ is the Borel $\sigma-$algebra  on $\Rr^m$ and $\Rc_{[0,1]}$ the Borel $\sigma-$algebra on $[0,1]$. $U$ is a fixed random variable, independent of $\Ac$ and uniformly distributed on $[0,1]$.\\
 \item On $\Omega \times \Rr^m \times [0,1]$ we define $w(\omega,x,t)=\omega$, $\chi(\omega,x,t)=x$ and $\tau(\omega,x,t)=t$. Clearly $w\circ \Phi$ is the identity map on $\Omega$, $\chi \circ \Phi=X$ and $\tau \circ \Phi=U.$\\
 \item The image probability $(\Pr\circ\Phi^{-1})$ of $\Phi$ can now be disintegrated,. There exist kernels:\\
 $\mu_{X,U}:\Omega \times (\Rc^m\otimes \Rc_{[0,1]})\rightarrow[0,1],~\Ac-$measurable\\
 \\
 $\mu_X: \Omega\otimes \Rc^m\rightarrow[0,1],~ \Ac-$measurable, such that\\ \\
$\Pr\circ\Phi^{-1}(A \times B\times C)=\int_{A} \Pr[d\omega] \mu_{X,U}(\omega, B \times C), ~~ A\in \Ac, B\in\Rc^m, C\in \Rc_{[0,1]}$\\
 \\
 $\Pr\circ\Phi^{-1}(A \times B\times [0,1])=\int_{A} \Pr[d\omega] \mu_{X}(\omega,B),  A\in \Ac, B\in \Rc^m$
 \end{enumerate}
Almost surely the measure $\mu_X(\omega,.)$ is the marginal distribution of the probability measure $\mu_{X,U}(\omega,.)$.  The equality $\Er[X\mid\Ac]=0$ simply means that almost surely $\int_{\Rr^m} d\mu_X(\omega) \,x=0$.Because of independence the conditional distribution of $U$ (given $\Ac$) is always the Lebesgue measure $\lambda$ on $[0,1]$. 

For a probability measure $\nu$ on $\Rr^m$, having a second moment and each $\omega\in\Omega$, we look at the square of the Wasserstein$-2$ distance, $W^2(\mu_X(\omega),\nu)$ between $\nu$ and $\mu_X(\omega)$.  It turns out, \cite{D4}, that there is  a probability measure $\nu_0$ where the function 
$$\nu\rightarrow \int_\Omega \Pr[d\omega]\, W^2(\mu_X(\omega),\nu)$$
attains its minimum. The measure $\nu_0$ satisfies $\int_{\Rr^m} d\nu_0\, y=0$. We denote by $\kappa(\omega)$ the optimal measure on $\Rr^m\times\Rr^m$ of the  corresponding Kantorovitch problem with penalty function $\frac{1}{2}|x-y|^2$.

Using  results from transport theory we will construct $Y$ for each $\omega\in\Omega$, or better for a set of full measure. $Y(\omega)$ will be a function of $X(\omega)$ and $U(\omega)$. This will guarantee measurability. For almost every $\omega$, $Y$ will have conditional to $\Ac$  the distribution $\nu_0$.  This will guarantee independence and it will in fact conclude the construction of the looked for variable $Y$. The variable $Y$ can be seen as the combination $\eta\circ\Phi$ where $\eta(\omega,x,t)$ is $\Ac \otimes \Rc^m \otimes \Rc_{[0,1]}$ measurable. For each $\omega$ we have $\int d\mu_X(\omega)\,x$=0 (because $\Er[X\mid \Ac]=0$) and $\int d\mu_{X,U}(\omega)\eta(\omega),x,t)=\int_{\Rr^m} d\nu_0\, y=0$. Also by construction $$\int_{\Rr^m\times\Rr^m} |x-y|^2 d\kappa(\omega)=W^2(\mu_X(\omega),\nu).$$
 
\section{An Inequality for the Transport Problem}

We simplify the notation used in the previous section and we will look at two measures $\mu,\nu_0$ on $\Rr^m$. We suppose that $\int_{\Rr^m}x\, d\mu=\int_{\Rr^m} y\,d\nu_0=0$ and the two measures have finite second moments. The Wasserstein distance is obtained by minimising $\int_{\Rr^m\times\Rr^m}\beta(dx,dy)\,|x-y|^2$ over all probability measures with marginals $\mu$ and $\nu_0$. This is the same as maximising the integral
$\int_{\Rr^m\times\Rr^m}\beta(dx,dy)\,x\cdot y$ over the same set of measures $\beta$. Let us denote by $\kappa$ such a maximiser. 
\begin{theorem} An optimal measure $\kappa$ satisfies 
$$
(x\cdot y)^- \le \int_{\Rr^m\times\Rr^m}\kappa(dx,dy) \,(x\cdot y)
$$
implying that $(x\cdot y)^-$ is bounded and
$$                                                                                                                                                                                                                                                                          
\int_{\Rr^m\times\Rr^m}\kappa(dx,dy)\, (x\cdot y)^- \le \int_{\Rr^m\times\Rr^m}\kappa(dx,dy)\, (x\cdot y),
$$
where $(x\cdot y)^-$ is the negative part of the scalar product i.e. $(x\cdot y)^-=-\min(0,(x\cdot y))$.
In case $m=1$ we also get  the  bound $$\frac{1}{4}\(\int  \mu(dx) |x|\)\(\int \nu_0(dy)\,|y|\).$$
\end{theorem}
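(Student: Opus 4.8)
The plan is to exploit the optimality of $\kappa$ in the form of monotonicity of its support, and then to integrate out one of the two transport variables using the hypothesis that both marginals are centred. Throughout write $C=\int_{\Rr^m\times\Rr^m}\kappa(dx,dy)\,(x\cdot y)$ for the maximal correlation; it is finite because $\mu$ and $\nu_0$ have finite second moments, and it is exactly the quantity on the right-hand side of both asserted inequalities.

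First I would record the two-point exchange inequality coming from optimality. Since $\kappa$ maximises $\int x\cdot y\,d\beta$ among couplings of $\mu$ and $\nu_0$, its support is monotone for this bilinear cost: for $\kappa$-almost all pairs $(x_0,y_0)$ and $(x_1,y_1)$,
\[
x_0\cdot y_0+x_1\cdot y_1\ge x_0\cdot y_1+x_1\cdot y_0 .
\]
(If this were violated on a set of positive measure, swapping the second coordinates there would strictly increase the correlation, contradicting optimality.) Now fix a support point $(x_0,y_0)$ and integrate against $\kappa(dx_1,dy_1)$. The integrals of $x_1$ and of $y_1$ vanish because the marginals are $\mu$ and $\nu_0$ with $\int x\,d\mu=\int y\,d\nu_0=0$, so the two cross terms disappear and one is left with
\[
x_0\cdot y_0+C\ge 0 ,
\]
that is $(x_0\cdot y_0)^-\le C$ for $\kappa$-almost every $(x_0,y_0)$. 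This is the first displayed inequality and shows in particular that $(x\cdot y)^-$ is bounded by the finite constant $C$. Integrating this pointwise bound against the probability measure $\kappa$ immediately yields $\int (x\cdot y)^-\,d\kappa\le C$, the second displayed inequality.

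For the sharper one-dimensional estimate I would use that when $m=1$ the optimal $\kappa$ is the monotone (comonotone) coupling, so $(x,y)$ may be represented as $(Q_\mu(U),Q_{\nu_0}(U))$ with $U$ uniform and $Q_\mu,Q_{\nu_0}$ nondecreasing. Decompose $(xy)^-=x^+y^-+x^-y^+$. Comonotonicity forces at most one of these terms to be nonzero $\kappa$-almost surely: $x^+y^->0$ needs $U$ in the range where $Q_\mu>0$ and $Q_{\nu_0}<0$, while $x^-y^+>0$ needs the reverse, and these two $U$-ranges cannot both be nonempty. On the surviving term, say $\int x^+y^-\,d\kappa$, the factor $x^+=Q_\mu(U)^+$ is a nondecreasing function of $U$ while $y^-=Q_{\nu_0}(U)^-$ is nonincreasing, so by the Chebyshev association inequality they are negatively correlated and $\int x^+y^-\,d\kappa\le\big(\int x^+\,d\mu\big)\big(\int y^-\,d\nu_0\big)$. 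Finally $\int x^+\,d\mu=\tfrac12\int|x|\,d\mu$ and $\int y^-\,d\nu_0=\tfrac12\int|y|\,d\nu_0$ since both measures are centred, and the claimed bound $\tfrac14\big(\int|x|\,d\mu\big)\big(\int|y|\,d\nu_0\big)$ follows.

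The main obstacle is the rigorous justification of the two-point exchange inequality as a $\kappa$-almost-everywhere statement that may legitimately be integrated against $\kappa$ in one variable: this is where optimality is really used, and it requires the cyclical-monotonicity and measurable-selection machinery of transport theory, together with the integrability granted by the second-moment hypothesis. An alternative, perhaps cleaner, route replaces this step by Kantorovich duality: take conjugate convex potentials $\phi,\psi=\phi^\ast$ with $\phi(x)+\psi(y)\ge x\cdot y$ everywhere, equality $\kappa$-a.e., and $C=\int\phi\,d\mu+\int\psi\,d\nu_0$; Jensen at the centred marginals gives $C\ge\phi(0)+\psi(0)$, and Fenchel--Young at $(x_0,0)$ and $(0,y_0)$ then yields $(x_0\cdot y_0)^-\le C$. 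In the one-dimensional part the only delicate point is the sign-region bookkeeping that makes one mixed term vanish; the association inequality and the centring identities are routine.
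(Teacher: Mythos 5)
Your proof is correct and follows essentially the same route as the paper's: monotonicity of the support of the optimal plan, integration in one variable against the centred marginals to obtain the pointwise bound $(x\cdot y)^-\le\int (x\cdot y)\,d\kappa$, and for $m=1$ comonotonicity plus the negative-association (Chebyshev) inequality together with the centring identities $\int x^+\,d\mu=\int x^-\,d\mu=\tfrac12\int|x|\,d\mu$. The only cosmetic difference is that the paper derives the monotonicity of the support from the Knott--Smith optimality criterion (a convex potential whose subgradient graph carries $\kappa$) rather than from your direct two-point exchange argument or the duality alternative you sketch.
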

\begin{remark} Because $\kappa$ yields a bigger integral than $\mu\otimes\nu_0$, we obviously have   $\int_{\Rr^m\times\Rr^m}\kappa(dx,dy) (x\cdot y)\ge \(\int_{\Rr^m}d\mu\,x\)\cdot\(\int_{\Rr^m}d\nu_0\,y\)=0$,
\end{remark}
\begin{proof} The proof is based on the Knott-Smith optimality criterion, \cite{D8}. This theorem says that there is a lower semi continuous convex function $\phi\colon \Rr^m\rightarrow \overline{\Rr}$ such that the support of $\kappa$ is contained in the graph of the subgradient of $\phi$. Because of convexity, the graph is a monotone set in $\Rr^m\times\Rr^m$ --- it is even cyclically monotone but we only need the weaker form of it.  In other words for all choices in the support of $\kappa$, say $(x_1,y_1)$ and $(x_2,y_2)$,  we have $(x_1-x_2)\cdot(y_1-y_2)\ge 0$. This can be rewritten as $x_1\cdot y_1 + x_2\cdot y_2\ge x_1\cdot y_2+x_2\cdot y_1$. If we integrate over $(x_2,y_2)$ with respect to $\kappa$ we get that 
$$
x_1\cdot y_1 \ge - \int \kappa(dx,dy)\, x\cdot y,
$$
for all elements $(x_1,y_1)$ in the support of $\kappa$. Hence $(x\cdot y)^-\le \int \kappa(dx,dy)\, x\cdot y$ ($\kappa-$\as) and integrating again with respect to the measure $\kappa$ yields the desired result.

The case $m=1$ is easier. We know that in this case the two functions $x$ and $y$ are commonotonic and this implies that also $-x^-$ and $y^+$ are commonotonic, similarly we get $x^+$ and $-y^-$ are commonotonic, Because $xy \le 0$ if and only if one of the two alternatives $x\le0,y\ge 0$ or $x\ge 0,y\le 0$ occurs, we get
$\{xy<0\}\subset \(\{x<0\}\cap\{y>0\}\)\cup\( \{x>0\}\cap\{y<0\}\)$. Using commonotonicity we can see that one of the two sets must be empty. Let us suppose that  $\(\{x<0\}\cap\{y>0\}\)$ is nonempty (the other case is similar).
$$
\int_{xy<0} \kappa(dx,dy) \, xy = \int_{x<0,y\ge 0} \kappa(dx,dy)\, xy.
$$
Because of  commonotonicity  we then have
$$
\int_{x<0,y\ge 0} \kappa(dx,dy)\, xy \ge   \(-\int \mu(dx) \,x^-\)\(\int \nu_0(dy) \,y^+\).
$$
Because $\int \mu(dx)\, x=\int \nu_0(dy)\, y=0$ we obtain
$$
\int_{xy<0} \kappa(dx,dy)\, xy \ge- \frac{1}{4} \(\int \mu(dx)\, |x|\)\(\int\nu_0(dy)\,|y|\).
$$
\end{proof}
\begin{remark}   We do not know whether the inequality for $m=1$ can be generalised to the case $m>1$.
\end{remark}
\section{Proof of the Main Theorem}

Before getting into the convergence problem, let us first look at the case where $\Er[X\mid \Ac]=0$, $Y$ is the best approximation to $X$ that is independent of $\Ac$. We use the inequality of the previous section to get a bound on $X-Y$.
\begin{theorem} $$\Er\[\(X\cdot Y\)^-\] \le \Er[X\cdot Y]=\Vert Y\Vert^2.$$
\end{theorem}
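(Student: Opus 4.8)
The plan is to reduce the statement to a pointwise (in $\omega$) application of Theorem 3.1, by conditioning everything on $\Ac$. The equality $\Er[X\cdot Y]=\Vert Y\Vert^2$ is essentially free: the optimality of $Y$ recorded in Section 1 gives $\Er[X-Y\mid Y]=0$, so $X-Y$ is orthogonal to $Y$, whence $\Er[(X-Y)\cdot Y]=0$ and therefore $\Er[X\cdot Y]=\Er[|Y|^2]=\Vert Y\Vert^2$. So only the inequality $\Er[(X\cdot Y)^-]\le\Er[X\cdot Y]$ carries content.

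The key observation is that the construction of Section 2 realizes, for almost every $\omega$, the pair $(X,Y)$ conditionally on $\Ac$ as the optimal Kantorovitch coupling $\kappa(\omega)$ between $\mu_X(\omega)$ and $\nu_0$. Indeed $Y=\eta\circ\Phi$ is manufactured as a function of $X$ and the auxiliary uniform variable $U$ precisely so that the regular conditional law of $(X,Y)$ given $\Ac$ is $\kappa(\omega)$; this is exactly what the relation $\int|x-y|^2\,d\kappa(\omega)=W^2(\mu_X(\omega),\nu_0)$ encodes. Granting this identification, for every bounded or nonnegative Borel $g$ on $\Rr^m\times\Rr^m$ one has, almost surely,
$$
\Er[g(X,Y)\mid\Ac](\omega)=\int\kappa(\omega)(dx,dy)\,g(x,y).
$$

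I then apply Theorem 3.1 pointwise. For almost every $\omega$ the two measures $\mu=\mu_X(\omega)$ and $\nu_0$ have vanishing barycenter: $\mu_X(\omega)$ because $\Er[X\mid\Ac]=0$ forces $\int x\,d\mu_X(\omega)=0$, and $\nu_0$ by its defining property $\int y\,d\nu_0=0$. Hence the hypotheses of Theorem 3.1 are met and the theorem yields, for a.e. $\omega$,
$$
\int\kappa(\omega)(dx,dy)\,(x\cdot y)^-\le\int\kappa(\omega)(dx,dy)\,(x\cdot y),
$$
which by the displayed conditioning identity reads $\Er[(X\cdot Y)^-\mid\Ac]\le\Er[X\cdot Y\mid\Ac]$ almost surely. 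Taking expectations, using the tower property, and combining with the equality already established gives $\Er[(X\cdot Y)^-]\le\Er[X\cdot Y]=\Vert Y\Vert^2$.

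The main obstacle is the identification in the second paragraph: one must check that the construction genuinely produces $\kappa(\omega)$ as the conditional joint law of $(X,Y)$, and not merely the correct marginals $\mu_X(\omega)$ and $\nu_0$ together with the right transport cost. This is the disintegration-and-measurability bookkeeping that Section 2 (and the cited \cite{D4}) is designed to supply, so in the write-up I would lean on that construction and restrict attention to a set of full $\Pr$-measure on which all the disintegration identities and the Knott--Smith optimality criterion hold simultaneously. Once the conditional law of $(X,Y)$ is pinned to $\kappa(\omega)$, the remainder is a routine conditioning argument.
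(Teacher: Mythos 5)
Your proposal is correct and follows essentially the same route as the paper: the paper likewise applies the Section 3 inequality to the optimal coupling $\kappa(\omega)$ for each $\omega$, integrates over $\Omega$ using the disintegration from Section 2 to identify $\int_\Omega d\Pr\int d\kappa\,(\chi\cdot\eta)$ with $\Er[X\cdot Y]$, and obtains the equality $\Er[X\cdot Y]=\Vert Y\Vert^2$ from $\Er[X\mid Y]=Y$. The identification of the conditional joint law of $(X,Y)$ with $\kappa(\omega)$, which you rightly flag as the only delicate point, is exactly what the paper delegates to the Section 2 construction and \cite{D4}.
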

\begin{proof} We freely use the notation introduced in section 2. The construction of $Y$ is based on the solution of transport problems, parametrised by $\omega\in\Omega$. For each $\omega\in\Omega$ we get the following inequality -- obtained in section 3.
$$
\int_{\Rr^m\times\Rr^m}d\kappa\, \(\chi\cdot \eta\)^-\le \int_{\Rr^m\times\Rr^m}d\kappa\, \(\chi\cdot \eta\).
$$
Integrating over $\Omega$ we get
$$
\int_\Omega d\Pr\int_{\Rr^m\times\Rr^m}d\kappa\, \(\chi\cdot \eta\)^-=\Er\[\(X\cdot Y\)^-\]
$$
and
$$
\int_\Omega d\Pr\int_{\Rr^m\times\Rr^m}d\kappa\, \(\chi\cdot \eta\)=\Er\[\(X\cdot Y\)\].
$$
Hence $\Er\[\(X\cdot Y\)^-\] \le \Er[X\cdot Y]$ which using $\Er[X\mid Y]=Y$, yields the statement of the theorem.
\end{proof}
Now we put -- as before -- $X_1 = X$ , $Y_n$ the best approximation of $X_n$ ($Y_n$ independent of $\Ac$), $X_{n+1}=X_n-Y_n$. 
\begin{theorem} The sequence $X_n$ tends to zero in $L^2$.  Consequently $X_1=\sum_{k\ge 1} Y_k$ where the series converges in $L^2$.  Moreover $\Vert X_1\Vert^2=\sum_k\Vert Y_k\Vert^2$.
\end{theorem}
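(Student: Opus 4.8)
The plan is to reduce everything to the single statement that $L:=\lim_n\Vert X_n\Vert^2$ vanishes. First I would record that the energy identity $\Vert X_1\Vert^2=\sum_{k=1}^n\Vert Y_k\Vert^2+\Vert X_{n+1}\Vert^2$ forces $\Vert X_n\Vert^2$ to decrease to some limit $L\ge 0$ and makes $\sum_k\Vert Y_k\Vert^2=\Vert X_1\Vert^2-L<\infty$, so that $\Vert Y_n\Vert\to 0$, i.e. $\Vert X_{n+1}-X_n\Vert\to 0$. Since $X_n\to 0$ almost surely and $\sup_n\Vert X_n\Vert\le\Vert X_1\Vert$, the family is bounded in $L^2$ and converges weakly to $0$; hence $L=0$ is equivalent to $L^2$-convergence, and also equivalent to uniform integrability of $\{|X_n|^2\}$. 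Once $L=0$ is known, the two remaining assertions are immediate: $X_{n+1}=X_1-\sum_{k=1}^{n}Y_k\to 0$ in $L^2$ gives $X_1=\sum_{k\ge 1}Y_k$ in $L^2$, and letting $n\to\infty$ in the energy identity gives $\Vert X_1\Vert^2=\sum_k\Vert Y_k\Vert^2$. So the whole content is the vanishing of $L$.

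Next I would localise the obstruction. For each fixed $R$, bounded convergence applied to $|X_n|^2\mathbf 1_{\{|X_n|\le R\}}$ (dominated by $R^2$, tending to $0$ a.s.) gives $\Er[|X_n|^2\mathbf 1_{\{|X_n|\le R\}}]\to 0$, whence $L=\lim_n\Er[|X_n|^2\mathbf 1_{\{|X_n|>R\}}]$ for every $R$. Thus $L>0$ would mean that all surviving $L^2$-mass escapes to infinity: the conditional laws $\mu_{X_n}(\omega,\cdot)$ retain a fixed amount of second moment at larger and larger radii even though $\mu_{X_n}(\omega,\{|x|>R\})\to 0$ in $L^1(\Pr)$ for each $R$.

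The heart of the argument is to use the transport inequality of Section 3 to rule out such escaping mass. On each atom it reads $\chi\cdot\eta\ge-\int\kappa(\omega)\,\chi\cdot\eta=-\Er[X_n\cdot Y_n\mid\Ac]$, holding $\kappa(\omega)$-almost surely; integrating gives not only $\Er[(X_n\cdot Y_n)^-]\le\Vert Y_n\Vert^2$ but also $\Er[(X_n\cdot Y_n)^+]\le 2\Vert Y_n\Vert^2$, so $X_n\cdot Y_n\to 0$ in $L^1$. Read through the optimal, cyclically monotone coupling $\kappa(\omega)$ defining $Y_n$, this says that where $X_n$ is large the transported value $Y_n$ is essentially not aligned with it, so that $X_{n+1}=X_n-Y_n$ inherits, up to an error controlled by $\Vert Y_n\Vert$, the same escaping second moment. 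I would make this precise by combining the pointwise bound $|X_{n+1}|^2\le|X_n|^2+2\,\Er[X_n\cdot Y_n\mid\Ac]+|Y_n|^2$ (valid $\kappa$-a.s. on the support) and the identity $\Vert X_n\Vert^2=\Vert Y_n\Vert^2+\Vert X_{n+1}\Vert^2$ to propagate a tail lower bound from step $n$ to step $n+1$; persistence of a fixed amount of mass at infinity, together with $|X_N|\ge|X_n|-\sum_{k=n}^{N-1}|Y_k|$, would then contradict $X_n\to 0$ a.s. on a set of positive measure, forcing $L=0$.

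The step I expect to be the main obstacle is exactly this last, quantitative one. The clean per-variable bound $\Vert Y_n\Vert\ge\frac{1}{2m}\Vert X_n\Vert_1$ is useless here, because an escaping-mass (spike) configuration has vanishing $L^1$-norm while keeping its $L^2$-norm; one cannot bound $\Vert Y_n\Vert$ from below by $\Vert X_n\Vert$ at a single step. The difficulty is to show that the transport inequality, which by itself controls only the small quantity $X_n\cdot Y_n$, nonetheless forbids the barycentric measure $\nu_0^{(n)}$ from ignoring a persistent spike: either the spike is asymptotically common to the atoms of $\Ac$, so that one barycentric step removes a definite proportion of it, or it genuinely varies with $\omega$, in which case the almost sure collapse of $X_n$ must be used to drive the Wasserstein variance of $\{\mu_{X_n}(\omega,\cdot)\}_\omega$ to $0$. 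Turning this dichotomy into a single estimate that bounds the escaping $L^2$-mass at step $n$ by a multiple of $\Vert Y_n\Vert$ (whose squares are summable) is the crux; the rest is bookkeeping around the energy identity and bounded convergence.
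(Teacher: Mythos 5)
You have assembled the right ingredients but stopped one step short of the argument that actually closes the proof, and the substitute route you sketch has a genuine gap. Your reduction to showing $L=\lim_n\Vert X_n\Vert^2=0$ is fine, and so is your use of the Section 3 inequality: pointwise on the support of the optimal coupling one has $X_n\cdot Y_n\ge -\Er[X_n\cdot Y_n\mid\Ac]$, which gives exactly the one-step bound $|X_{n+1}|^2\le |X_n|^2+|Y_n|^2+2\,\Er[X_n\cdot Y_n\mid\Ac]$ that you write down (the paper uses the even cruder pointwise bound $-2X_n\cdot Y_n\le 2(X_n\cdot Y_n)^-$ together with Theorem 4.1, $\Er[(X_n\cdot Y_n)^-]\le\Vert Y_n\Vert^2$). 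The decisive observation, which you do not make, is that iterating this bound yields $|X_{n+1}|^2\le |X_1|^2+\sum_{k\ge 1}|Y_k|^2+2\sum_{k\ge 1}(X_k\cdot Y_k)^-$ for every $n$, i.e. a single integrable function dominating all the $|X_n|^2$ simultaneously; its expectation is at most $\Vert X_1\Vert^2+3\sum_k\Vert Y_k\Vert^2<\infty$. Combined with the already established almost sure convergence $X_n\to 0$, the dominated convergence theorem gives $X_n\to 0$ in $L^2$ directly, and no analysis of escaping mass, uniform integrability, or tail propagation is needed.

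The alternative you propose in place of this step does not work as stated, and you yourself flag its central estimate as missing. Persistent $L^2$-mass at infinity lives on events of vanishing probability, so it is perfectly compatible with $X_n\to 0$ almost surely; deriving a contradiction from $|X_N|\ge |X_n|-\sum_{k=n}^{N-1}|Y_k|$ and pointwise convergence therefore cannot detect it --- that is precisely why the obstruction is uniform integrability rather than pointwise size. Moreover $\sum_k\Vert Y_k\Vert^2<\infty$ does not give almost sure convergence of $\sum_k|Y_k|$, so the triangle-inequality propagation you invoke is not available. The ``crux'' you describe (bounding the escaping $L^2$-mass at step $n$ by a multiple of $\Vert Y_n\Vert$) is neither established nor needed: the uniform domination above, obtained by summing the very inequality you already had in hand, is the missing idea.
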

\begin{proof} We get the following sequence of inequalities:
\begin{eqnarray*}
|X_2|^2=|X_1-Y_1|^2&=&|X_1|^2+|Y_1|^2-2X_1\cdot Y_1\le |X_1|^2+|Y_1|^2 + 2\(X_1\cdot Y_1\)^-\\
|X_3|^2=|X_2-Y_2|^2&=&|X_2|^2+|Y_2|^2-2X_2\cdot Y_2\\
&\le& |X_2|^2+|Y_2|^2 + 2\(X_2\cdot Y_2\)^-\\
&\le& |X_1|^2+|Y_1|^2 + 2\(X_1\cdot Y_1\)^-+|Y_2|^2 + 2\(X_2\cdot Y_2\)^-\\
\text{recursively we get}&{ }&\\
|X_{n+1|}^2&\le& |X_1|^2 +\sum_{k=1}^{k=n} |Y_k|^2 + 2\, \sum_{k=1}^{k=n} \(X_k\cdot Y_k\)^-.
\end{eqnarray*}
All these variables are bounded by
$$
|X_1|^2 +\sum_{k\ge 1} |Y_k|^2 + 2\, \sum_{k\ge 1} \(X_k\cdot Y_k\)^-,
$$
which is integrable since its expectation is smaller than $\Vert X_1\Vert^2 +3\,\sum_{k\ge 1}\Vert Y_k\Vert^2 <\infty$.
We already have seen that $X_n\rightarrow 0$ in probability (even almost surely) and the dominated convergence theorem now tells us that $X_n\rightarrow 0$ for the $L^2$ norm as well.
\end{proof}


\begin{thebibliography}{99}


\bibitem{D2} F. Delbaen: A bipolar theorem only using  independent random variables, {\em Pure and Applied Functional Analysis}, 2024, forthcoming.



\bibitem{D3}  F. Delbaen: Conditionally Atomless Extensions of Sigma Algebras, {\em arxiv 2003.09254, (2020) }

\bibitem{D4} F. Delbaen and C. Majumdar: Approximation with independent Variables, {\em Front. in Math. Fin., 2 (2), 2023, 141--149}

\bibitem{D8} M. Thorpe: {\em Introduction to Optimal Transport}, {F2.08, Centre for Mathematical Sciences, University of Cambridge, 2018}



\end{thebibliography}
\end{document}